\documentclass[12pt,letterpaper]{amsart}
\usepackage{amssymb,amsmath,amscd,eucal,epsfig}
\def\<{\langle}                     %added
\def\>{\rangle}                     %added
\newcommand{\ben}{\begin{enumerate}}
\newcommand{\een}{\end{enumerate}}

\theoremstyle{plain}
\newtheorem{theorem}{Theorem}[section]
\newtheorem{proposition}{Proposition}[section]

\newtheorem{corollary}{Corollary}[section]
\newtheorem{remark}{Remark}[section]

\theoremstyle{definition}
\newtheorem{definition}{Definition}[section]

\numberwithin{equation}{section}
\begin{document}

\begin{center}
{\textbf{{Direct Product of Picture Fuzzy Subgroups}}}\ \\ \ \\

{Taiwo O. Sangodapo},\\
Department of Mathematics,\\ University of Ibadan, Ibadan, Nigeria\\ to.sangodapo@ui.edu.ng,
toewuola77@gmail.com
\end{center}
\ \\ \  \\

\begin{center} {\textbf{Abstract}} \end{center}
In this paper, the concept of a picture fuzzy subgroup of a group is studied, and the notion of the direct product of picture fuzzy subgroups is introduced. Several characterisations of the direct product of picture fuzzy subgroups are established using the $(r, s, t)$-cut sets of picture fuzzy sets. 
\; \\ \; \\ \; \\

\noindent \textbf{MSC Classification:} 03E72, 08A72
\ \\ \ \\

\noindent {\textbf{Keywords}}: Picture Fuzzy Set, Picture Fuzzy Subgroup, Direct Product, Cut set, Picture Fuzzy Normal Subgroup, Coset

\section{Introduction}
Fuzzy set theory was introduced by Zadeh, 1965 \cite{z} as an extension of classical set theory. Zadeh's work was generalised to intuitionistic fuzzy set by Atanassov, 1984 \cite{a1}. The theories of fuzzy sets and intuitionistic fuzzy sets have been studied and applied to other areas see \cite{ta1, ta2, ta3} for details.  In 2013, Cuong and Kreinovich \cite{ck} generalised both fuzzy set and intuitionistic fuzzy set to picture fuzzy set. In other theories before the adventure of picture fuzzy set, the degree of neutrality was not incorporated. This important concept can be seen in the voting system where human beings are of the opinions to vote for, vote against, abstain from voting and refusal of voting, and also in medical diagnosis. The theory of picture fuzzy set has been widely studied and applied in various real life problems see \cite{c4, ch, ckn, dg, to, t1, t2, t3, t4, s1, s2} for details.

One important direction in this development is the study of picture fuzzy subgroups, which generalise classical subgroups and fuzzy subgroups of a group introduced by Dogra and Pal \cite{dp} whereby extending both Rosenfield's and Biswas' works. Sangodapo and Onasanya \cite{so} contributed to the work of Dogra and Pal \cite{dp} to establish some characteristics of PFSG via $(r, s, t)$-cut sets of a PFS.

In this work, contribution was made to the work of Sangodapo and Onasanya \cite{so} to obtain some characterisations of the direct product of picture fuzzy subgroups via $(r, s, t)$-cut sets.

\ \\

\section{Preliminaries}
This section gives the basic definitions and existing results relating to picture fuzzy subgroups.

\begin{definition} \cite{ck}
A picture fuzzy set Q of $Y$ is defined as $$Q = \lbrace (y, \sigma_{Q}(y), \tau_{Q}(y), \gamma_{Q}(y))| y \in Y \rbrace,$$ where the functions $$\sigma_{Q}: Y \rightarrow [0, 1],~ \tau_{Q}: Y \rightarrow [0, 1]~ \text{and}~ \gamma_{Q}: Y \rightarrow [0, 1]$$
are called the positive, neutral and negative membership degrees of $y \in Q$, respectively, and $\sigma_{Q}, \tau_{Q}, \gamma_{Q}$ satisfy $$0 \leq \sigma_{Q}(y) + \tau_{Q}(y) + \gamma_{Q}(y) \leq 1,~ \forall y \in Y.$$ For each $y \in Y$, $S_{Q}(y) = 1 - (\sigma_{Q}(y) - \tau_{Q}(y) - \gamma_{Q}(y))$ is called the refusal membership degree of $y \in Q$.
\end{definition}

\begin{definition}\cite{dp}
Let $(G, \ast)$ be a crisp group and $Q = \lbrace (y, \sigma_{Q}(y), \tau_Q(y), \eta_Q(y))~|~ y \in G \rbrace$ be a PFS in $G$. Then, $Q$ is called picture fuzzy subgroup of $G$ (PFSG) if\\ (i)~ $\sigma_Q(a \ast b) \geq \sigma_Q(a) \wedge \sigma_Q(b),~ \tau_Q(a \ast b) \geq \tau_Q(a) \wedge \tau_Q(b),~ \eta_Q(a \ast b) \leq \eta_Q(a) \vee \eta_Q(b) $\\ (ii) $\sigma_Q(a^{-1}) \geq \sigma_Q(a),~ \tau_Q(a^{-1}) \geq \tau_Q(a),~ \eta_Q(a^{-1}) \leq \eta_Q(a)$ for all $a, b \in G$.\\ Notice that $a^{-1}$ is the inverse of $a \in G$,\\ or equivalently, $Q$ is a PFSG of $G$ if and only if\\ $\sigma_Q(a \ast b^{-1}) \geq \sigma_Q(a) \wedge \sigma_Q(b),~ \tau_Q(a \ast b^{-1}) \geq \tau_Q(a) \wedge \tau_Q(b),~ \eta_Q(a \ast b^{-1}) \leq \eta_Q(a) \vee \eta_Q(b) .$
\end{definition}

\begin{definition}\cite{dp}
Let $(G, \ast)$ be a crisp group and $Q = (\sigma_{Q}, \tau_Q, \eta_Q)$ be a PFSG of $G$. Then, $Q$ is called picture fuzzy normal subgroup of $G,$ denoted by PFNSG if $$\sigma_{Qa}(b) = \sigma_{aQ}(b),~ \tau_{Qa}(b) = \tau_{aQ}(b),~ \eta_{Qa}(b) = \eta_{aQ}(b)$$ for all $a,~ b \in G$.
\end{definition}
\noindent The above definition can be redefined as;
\begin{definition}
A PFSG $Q = (\sigma_{Q}, \tau_Q, \eta_Q)$ of a group $G$ is said to be a PFNSG of $G$ if $\sigma_Q(ab) = \sigma_Q(ba),~ \tau_Q(ab) = \tau_Q(ba)~ \text{and}~ \eta_Q(ab) = \eta_Q(ba)~\forall~ a, b \in G,$ or equivalently, $Q$ a PFSG of $G$ is said to be normal if and only if $$\sigma_Q(b^{-1}ab) = \sigma_Q(a),~ \sigma_Q(b^{-1}ab) = \sigma_Q(a)~ \text{and}~ \eta_Q(b^{-1}ab) = \eta_Q(a)~ \forall~ a, b \in G.$$
\end{definition}

\begin{definition} \cite{dp}
Let $Q  = \lbrace (x, \sigma_{Q}, \tau_{Q}, \eta_{Q})| a \in X \rbrace$ be PFS over the universe $X$. Then, $(r,s,t)$-cut set of $Q$ is the crisp set in $Q$, denoted by $C_{r, s, t} (Q)$ and is defined by $$C_{r, s, t} (Q) = \left\lbrace a \in X | \sigma_{Q}(a) \geq r,~\tau_{Q}(a) \geq s, \eta_{Q}(a) \leq t  \right\rbrace$$ $r, s, t \in [0, 1]$ with the condition  $0 \leq r + s + t \leq 1$
\end{definition}

\begin{theorem}\cite{dp}\label{1}
Let $(G, \ast)$ be a crisp group and $Q = (\sigma_{Q}, \tau_Q, \eta_Q)$ be a PFSG of $G$. Then, $Q$ is a PFSG (PFNSG) if and only if $C_{r, s, t} (Q)$ is a crisp subgroup (normal) of $G$.
\end{theorem}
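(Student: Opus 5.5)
The statement is the standard level-subset characterisation, so I will prove both directions directly from the definitions. The intended reading is: a picture fuzzy set $Q$ of $G$ is a PFSG if and only if every non-empty cut set $C_{r,s,t}(Q)$, with $r,s,t\in[0,1]$ and $r+s+t\le 1$, is a subgroup of $G$. The bracketed version replaces "subgroup" by "normal subgroup" and PFSG by PFNSG. Non-emptiness has to be assumed, since the empty set is not a subgroup.

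\emph{($\Rightarrow$).} Let $Q$ be a PFSG and let $a,b\in C_{r,s,t}(Q)$. Then $\sigma_Q(a),\sigma_Q(b)\ge r$, $\tau_Q(a),\tau_Q(b)\ge s$ and $\eta_Q(a),\eta_Q(b)\le t$. The equivalent one-line condition in the definition of a PFSG gives
\[
\sigma_Q(ab^{-1})\ge \sigma_Q(a)\wedge\sigma_Q(b)\ge r,\quad \tau_Q(ab^{-1})\ge s,\quad \eta_Q(ab^{-1})\le \eta_Q(a)\vee\eta_Q(b)\le t .
\]
Hence $ab^{-1}\in C_{r,s,t}(Q)$, and the one-step subgroup test shows that the cut is a subgroup.

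For the normal case, take $a\in C_{r,s,t}(Q)$ and $g\in G$. The condition $\sigma_Q(g^{-1}ag)=\sigma_Q(a)$, together with the analogous equalities for $\tau_Q$ and $\eta_Q$, puts $g^{-1}ag$ in the same cut, so the cut is normal.

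\emph{($\Leftarrow$).} Given $a,b\in G$, I take the specific levels
\[
r=\sigma_Q(a)\wedge\sigma_Q(b),\quad s=\tau_Q(a)\wedge\tau_Q(b),\quad t=\eta_Q(a)\vee\eta_Q(b).
\]
By construction $a,b\in C_{r,s,t}(Q)$, so this cut is non-empty. Since it is a subgroup, $ab^{-1}$ also lies in it, and reading off the three defining inequalities yields exactly the PFSG condition.

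For normality, I apply the same idea with the single element $a$, taking $r=\sigma_Q(a)$, $s=\tau_Q(a)$, $t=\eta_Q(a)$. Normality of the cut gives $g^{-1}ag$ in it, so $\sigma_Q(g^{-1}ag)\ge\sigma_Q(a)$ and $\tau_Q(g^{-1}ag)\ge\tau_Q(a)$, and likewise $\eta_Q(g^{-1}ag)\le\eta_Q(a)$. Applying this to the element $g^{-1}ag$ and the conjugator $g^{-1}$ gives the reverse inequalities, hence equality.

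\textbf{Main obstacle.} The step needing care is checking that the chosen triple $(r,s,t)$ is admissible, i.e.\ that $r+s+t\le 1$. This is not automatic: for example $\sigma_Q(a)\wedge\sigma_Q(b)$ might come from $a$ while $\eta_Q(a)\vee\eta_Q(b)$ comes from $b$. I will handle it as follows. Say the maximum $t$ is attained at $b$, so $t=\eta_Q(b)$. Then $r+s+t\le \sigma_Q(b)+\tau_Q(b)+\eta_Q(b)\le 1$, using $r\le\sigma_Q(b)$, $s\le\tau_Q(b)$ and the picture fuzzy constraint at $b$. The case where the maximum is at $a$ is symmetric. In the normality step the constraint holds trivially, since the triple is just the membership values at the single point $a$. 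With admissibility settled, the rest is routine unwinding of the definitions.
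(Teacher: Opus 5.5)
The paper gives no proof of this theorem: it is quoted from Dogra and Pal and then used as a black box, so there is no argument of the paper's to set yours against. Your proof is correct and complete. It proves the meaningful version of the statement, in which $Q$ is assumed only to be a picture fuzzy set; under the literal hypothesis that $Q$ is already a PFSG, the subgroup half is vacuous.

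Both qualifications you add are genuinely needed, and you handle them properly.
\begin{itemize}
\item Restricting to non-empty cuts is necessary. The constant PFSG with $\sigma_Q\equiv\frac{1}{2}$, $\tau_Q\equiv\eta_Q\equiv 0$ has $C_{1,0,0}(Q)=\emptyset$, even though $r+s+t=1$ is admissible.
\item Your bound $r+s+t\le\sigma_Q(b)+\tau_Q(b)+\eta_Q(b)\le 1$, taken at the point where $t=\eta_Q(a)\vee\eta_Q(b)$ is attained, is exactly what makes your choice of levels in the converse legitimate.
\end{itemize}
The conjugation by $g^{-1}$ then correctly upgrades the normality inequalities to equalities.

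The non-empty-cut formulation is also the one the paper actually relies on later. It deduces that $P\times Q$ is a PFSG from $C_{r,s,t}(P\times Q)=C_{r,s,t}(P)\times C_{r,s,t}(Q)$, and that set is empty as soon as either factor cut is.
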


\begin{definition} \cite{dp}
Let $(G, \ast)$ be a crisp group and $Q = (\sigma_{Q}, \tau_Q, \eta_Q)$ be a PFSG of $G$. Then, for $a \in G$ the picture fuzzy left coset of $Q \in G$ is the PFS $aQ = (\sigma_{aQ},~ \tau_{aQ},~ \eta_{aQ})$ defined by $$\sigma_{aQ}(u) = \sigma_{Q} (a^{-1} \ast u),~ \tau_{aQ}(u) = \tau_{Q} (a^{-1} \ast u)~ \text{and}~  \eta_{aQ}(u) = \eta_{Q} (a^{-1} \ast u)$$ for all $ u \in G$.
\end{definition}

\begin{definition}\cite{dp}
Let $(G, \ast)$ be a crisp group and $Q = (\sigma_{Q}, \tau_Q, \eta_Q)$ be a PFSG of $G$. Then, for $a \in G$ the picture fuzzy right coset of $Q \in G$ is the PFS $Qa = (\sigma_{Qa},~ \tau_{Qa},~ \eta_{Qa})$ defined by $$\sigma_{Qa}(u) = \sigma_{Q} (u \ast a^{-1}),~ \tau_{Qa}(y) = \tau_{Q} (u \ast a^{-1})~ \text{and}~ \eta_{Qa}(u) = \eta_{Q} ( u \ast a^{-1})$$ for all $u \in G$.
\end{definition}

\begin{definition}\cite{dp}
Let $(G, \ast)$ be a crisp group and $Q = (\sigma_{Q}, \tau_Q, \eta_Q)$ be a PFSG of $G$. Then, $Q$ is called picture fuzzy normal subgroup (PFNSG) of $G$ if $$\sigma_{Qa}(y) = \sigma_{aQ}(y),~ \tau_{Qa}(y) = \tau_{aQ}(y),~ \eta_{Qa}(y) = \eta_{aQ}(y)$$ for all $a,~ y \in G$.
\end{definition}

\begin{proposition} \cite{so}
Let $Q$ be a PFSG of $G$ and $a$ be any fixed element of $G.$ Then, \ \\ \begin{itemize}
\item [(i.)] $a \ast C_{r, s, t} (Q) = C_{r, s, t} (a \ast Q).$ \ \\ \item [(ii.)] $C_{r, s, t} (Q) \ast a = C_{r, s, t} (Q \ast a),~ \forall~ r, s, t \ [0, 1] $ with $r + s + t \leq 1.$
\end{itemize}
\end{proposition}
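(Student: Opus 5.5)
The plan is to prove both parts by double inclusion, unwinding the definitions of the $(r,s,t)$-cut set and of the picture fuzzy left and right cosets. The crisp coset $a \ast C_{r,s,t}(Q)$ is $\lbrace a \ast x \mid x \in C_{r,s,t}(Q) \rbrace$. The fuzzy coset $a \ast Q$ is the PFS $aQ$, with $\sigma_{aQ}(u) = \sigma_Q(a^{-1}\ast u)$ and similarly for $\tau$ and $\eta$. Throughout, fix $r,s,t \in [0,1]$ with $r+s+t \leq 1$.

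For (i), first let $u \in a \ast C_{r,s,t}(Q)$. Then $u = a \ast x$ for some $x$ with $\sigma_Q(x) \geq r$, $\tau_Q(x) \geq s$ and $\eta_Q(x) \leq t$. Since $a^{-1}\ast u = x$ by associativity and the inverse axiom, we get
\[
\sigma_{aQ}(u) = \sigma_Q(x) \geq r,\quad \tau_{aQ}(u) = \tau_Q(x) \geq s,\quad \eta_{aQ}(u) = \eta_Q(x) \leq t .
\]
Hence $u \in C_{r,s,t}(aQ)$.

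Conversely, let $u \in C_{r,s,t}(aQ)$ and set $x = a^{-1}\ast u$. The defining inequalities of the cut set for $aQ$ at $u$ are exactly the inequalities $\sigma_Q(x)\geq r$, $\tau_Q(x)\geq s$ and $\eta_Q(x)\leq t$. So $x \in C_{r,s,t}(Q)$, and $u = a \ast x \in a\ast C_{r,s,t}(Q)$.

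Part (ii) is the mirror argument with $x = u \ast a^{-1}$ and $u = x \ast a$, using the definition of the right coset $Qa$. Note that the PFSG hypothesis on $Q$ is never needed; the argument is purely set-theoretic.

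The only delicate point is notational. We must identify $a\ast Q$ in the statement with the left coset $aQ$ of the definition. We must also check that $aQ$ is a genuine PFS, so that its cut set makes sense. This holds because its three membership values at $u$ are those of $Q$ at $a^{-1}\ast u$, so the sum condition is inherited. I expect no real obstacle beyond this bookkeeping.
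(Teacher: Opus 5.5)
Your double-inclusion argument is correct: the cut-set conditions for $aQ$ at $u$ are exactly the cut-set conditions for $Q$ at $x=a^{-1}\ast u$, and those for $Qa$ at $u$ are exactly those for $Q$ at $x=u\ast a^{-1}$. You are also right that the PFSG hypothesis plays no role. The paper gives no proof of this proposition; it only quotes it from Sangodapo and Onasanya and then uses it in the proof of Theorem \ref{2}, so there is no argument in the paper to compare with yours.
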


\begin{definition}\cite{t4}
Let $Y_1$ and $Y_2$ be two nonempty sets and $f:Y_1 \rightarrow Y_2$ be a mapping. Let $P$ and $Q$ be two PFSs of $Y_1$ and $Y_2,$ respectively. Then, the image of $P$ under $f$ denoted by $f(P)$ is defined as $$f(P)(y_2) = (\sigma_{f(P)}(y_2),~ \tau_{f(P)}(y_2),~ \eta_{f(P)}(y_2)),$$ where
$$\sigma_{f(P)}(y_2) = \left\{\begin{array}{ll} \vee\{\sigma_P(y_1): y_1 \in  f^{-1}(y_2)\} \\
\ \\

0,\; otherwise \end{array}\right.,$$
$$\tau_{f(P)}(y_2) = \left\{\begin{array}{ll} \vee\{\tau_P(y_1): y_1 \in  f^{-1}(y_2)\} \\
\ \\
		
0,\; otherwise \end{array}\right.,$$ and
$$\eta_{f(P)}(y_2) = \left\{\begin{array}{ll} \wedge\{\eta_P(y_1): y_1 \in  f^{-1}(y_2)\} \\
\ \\
		
1,\; otherwise \end{array}\right.$$
Thus, $$f(P)(y_2) = \left\{\begin{array}{ll} (\vee\{\sigma_P(y_1): y_1 \in  f^{-1}(y_2)\}, \vee\{\tau_P(y_1): y_1 \in  f^{-1}(y_2)\}, \wedge\{\eta_P(y_1): y_1 \in  f^{-1}(y_2)\} )\\
\ \\
		
(0,0,1), \; \; \; \; \;  otherwise 
\end{array}\right.$$

The pre-image of $Q$ under $f,$ denoted by $f^{-1}(Q)$ is also defined as $$f^{-1}(Q)(y_1) = (\sigma_{f^{-1}(Q)}(y_1), \tau_{f^{-1}(Q)}(y_1), \eta_{f^{-1}(Q)}(y_1))$$ where $$\sigma_{f^{-1}(Q)}(y_1) = \sigma_Q(f(y_1)), \tau_{f^{-1}(Q)}(y_1) = \tau_Q(f(y_1)), \eta_{f^{-1}(Q)}(y_1) = \eta_Q(f(y_1)).$$ Thus, $$f^{-1}(Q)(y_1) = (\sigma_Q(f(y_1)), \tau_Q(f(y_1)), \eta_Q(f(y_1))).$$
\end{definition}

\begin{remark} \cite{t4}
For any $y_1 \in Y_1,$ we have $\sigma_{f(P)}(f(y_1)) \geq \sigma_P(y_1),~\tau_{f(P)}(f(y_1)) \geq \tau_P(y_1)$ and $\eta_{f(P)}(f(y_1)) \leq \eta_P(y_1).$
\end{remark}

\begin{theorem} \cite{t4}
Let $f: Y_1 \rightarrow Y_2$ be a mapping. Then,
\begin{itemize}
\item [(i)] $f(C_{r, s, t} (P)) \subseteq C_{r, s, t} (f(P)),~ \text{for~ all}~ P \in PFS(Y_1)$
\item [(ii)] $f^{-1}(C_{r, s, t} (Q)) = C_{r, s, t} (Q)(f^{-1}(Q)),~ \text{for~ all}~ Q \in PFS(Y_2).$
\end{itemize}
\end{theorem}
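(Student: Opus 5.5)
Part (i) follows from the remark, and part (ii) by unwinding the definitions. Throughout, fix $r,s,t\in[0,1]$ with $r+s+t\le 1$. (Part (ii) as printed has a typographical slip; I read the right-hand side as $C_{r,s,t}(f^{-1}(Q))$.)

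For (i), take $y_2\in f(C_{r,s,t}(P))$. Then $y_2=f(y_1)$ for some $y_1\in Y_1$ with
\[
\sigma_P(y_1)\ge r,\qquad \tau_P(y_1)\ge s,\qquad \eta_P(y_1)\le t .
\]
Since $y_1\in f^{-1}(y_2)$, this fibre is nonempty, so the image is given by the supremum/infimum branch of its definition rather than the default $(0,0,1)$. The preceding remark gives
\[
\sigma_{f(P)}(y_2)\ge \sigma_P(y_1)\ge r,\qquad \tau_{f(P)}(y_2)\ge\tau_P(y_1)\ge s,\qquad \eta_{f(P)}(y_2)\le\eta_P(y_1)\le t .
\]
Hence $y_2\in C_{r,s,t}(f(P))$.

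For (ii), argue by a chain of equivalences for arbitrary $y_1\in Y_1$:
\[
y_1\in f^{-1}(C_{r,s,t}(Q)) \iff f(y_1)\in C_{r,s,t}(Q) \iff \sigma_Q(f(y_1))\ge r,\ \tau_Q(f(y_1))\ge s,\ \eta_Q(f(y_1))\le t .
\]
By the definition of the pre-image, $\sigma_Q(f(y_1))=\sigma_{f^{-1}(Q)}(y_1)$, and likewise for $\tau$ and $\eta$. So the last condition holds if and only if $y_1\in C_{r,s,t}(f^{-1}(Q))$, and the two sets are equal.

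Both parts are direct unwinding of definitions, so I expect no real obstacle. The only points needing care are that the fibre is nonempty in (i), so the correct branch of the image definition applies, and the reading of the misprinted right-hand side in (ii). I do not expect equality in (i) in general, because a supremum over the fibre can reach the thresholds without any single preimage doing so. If needed, I would illustrate this with a short example, such as a fibre whose $\sigma_P$-values increase to $r$ without attaining it.
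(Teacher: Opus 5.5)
Your proof is correct and complete. The paper gives no proof of this statement; it is quoted from \cite{t4}, so there is no argument of the paper's to compare against. Your argument is the standard one: part (i) follows from the Remark once the fibre $f^{-1}(y_2)$ is known to be nonempty, and part (ii) from $\sigma_{f^{-1}(Q)}=\sigma_Q\circ f$ and its analogues for $\tau$ and $\eta$. Reading the misprinted right-hand side of (ii) as $C_{r,s,t}(f^{-1}(Q))$ is the intended meaning.

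On your side remark that (i) is generally strict: you do not need an infinite fibre to see this. Two simpler examples suffice:
\begin{itemize}
\item Take a non-surjective $f$ and $(r,s,t)=(0,0,1)$. Then $f(C_{0,0,1}(P))=f(Y_1)\subsetneq Y_2=C_{0,0,1}(f(P))$.
\item Take a two-point fibre $f(a)=f(b)=y$ with $P(a)=(0.3,0,0.3)$, $P(b)=(0,0.3,0.3)$, and $r=s=t=0.3$. Then $f(P)(y)=(0.3,0.3,0.3)$, so $y\in C_{r,s,t}(f(P))$. But neither $a$ nor $b$ lies in $C_{r,s,t}(P)$, because the three components can be attained at different preimages.
\end{itemize}
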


\begin{theorem} \label{2}
If $P = (\sigma_P, \tau_P, \eta_P )$ be a PFSG of $G.$ Then, $P$ is PFNSG of $G$ if and only if $C_{r, s, t} (P)$ is a PFNSG of $G,$ for all $0 \leq r, s, t \leq 1$ with $0 \leq r + s + t \leq 1.$
\end{theorem}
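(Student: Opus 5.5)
We read ``$C_{r,s,t}(P)$ is a PFNSG'' as ``$C_{r,s,t}(P)$ is a (crisp) normal subgroup of $G$ whenever it is nonempty''. The plan is to argue both directions directly from the conjugation form of normality, $\sigma_P(b^{-1}ab)=\sigma_P(a)$, $\tau_P(b^{-1}ab)=\tau_P(a)$, $\eta_P(b^{-1}ab)=\eta_P(a)$. We use Theorem \ref{1} only to know that each nonempty cut $C_{r,s,t}(P)$ of the PFSG $P$ is already a subgroup of $G$. We could also simply cite the PFNSG part of Theorem \ref{1}, but we give the argument explicitly.

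For the forward direction, assume $P$ is a PFNSG and fix admissible $r,s,t$ with $C_{r,s,t}(P)\neq\emptyset$. By Theorem \ref{1} the cut is a subgroup, so only closure under conjugation remains. Take $a\in C_{r,s,t}(P)$ and $b\in G$. Normality gives $\sigma_P(b^{-1}ab)=\sigma_P(a)\ge r$, $\tau_P(b^{-1}ab)=\tau_P(a)\ge s$ and $\eta_P(b^{-1}ab)=\eta_P(a)\le t$. Hence $b^{-1}ab\in C_{r,s,t}(P)$, and so $b^{-1}C_{r,s,t}(P)b\subseteq C_{r,s,t}(P)$ for all $b$, which is normality.

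For the converse, assume every nonempty cut is a normal subgroup. Fix $a,b\in G$ and set $r=\sigma_P(a)$, $s=\tau_P(a)$, $t=\eta_P(a)$. Since $P$ is a PFS, $r+s+t\le1$, so this triple is admissible, and $a\in C_{r,s,t}(P)$, so the cut is nonempty. Normality of the cut gives $b^{-1}ab\in C_{r,s,t}(P)$, that is,
$$\sigma_P(b^{-1}ab)\ge\sigma_P(a),\quad \tau_P(b^{-1}ab)\ge\tau_P(a),\quad \eta_P(b^{-1}ab)\le\eta_P(a).$$
Applying the same reasoning to the element $b^{-1}ab$ and the conjugator $b^{-1}$ gives the reverse inequalities, since $(b^{-1})^{-1}(b^{-1}ab)b^{-1}=a$. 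Therefore equality holds in all three components, and $P$ is a PFNSG by the conjugation characterisation.

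The main obstacle is the converse. There we must pick the cut parameters to be exactly the membership values of $a$, and check that they satisfy $r+s+t\le1$; this follows from the PFS condition $\sigma_P+\tau_P+\eta_P\le1$, so every point lies in some admissible nonempty cut. We also need the symmetry trick to upgrade the inequalities to equalities. Everything else is routine bookkeeping.
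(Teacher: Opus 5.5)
Your proof is correct, but it takes a different route from the paper's.

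\textbf{The paper's route.} The paper works entirely through cosets. $P$ is normal iff $aP=Pa$ for all $a$. This holds iff $C_{r,s,t}(aP)=C_{r,s,t}(Pa)$ for all admissible $(r,s,t)$. By the preliminary proposition $a\ast C_{r,s,t}(Q)=C_{r,s,t}(aQ)$ and its right-handed version, this holds iff $a\,C_{r,s,t}(P)=C_{r,s,t}(P)\,a$ for all $a$. That in turn holds iff each cut is normal.

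\textbf{Your route.} You work pointwise with the conjugation form $\sigma_P(b^{-1}ab)=\sigma_P(a)$, and similarly for $\tau_P$ and $\eta_P$.

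\textbf{What each buys.} The paper's chain is shorter and reuses the coset--cut compatibility result. However, its second equivalence tacitly assumes that a picture fuzzy set is determined by its family of $(r,s,t)$-cuts. Proving that needs exactly your device: take $(r,s,t)=(\sigma_P(a),\tau_P(a),\eta_P(a))$, which is admissible by the PFS condition, then argue by symmetry. So your version makes the one non-formal step explicit and is self-contained.

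\textbf{Nonempty cuts.} Your restriction to nonempty cuts is a genuine correction. A cut of a PFSG is nonempty iff it contains $e$, so for levels with, say, $r>\sigma_P(e)$ the cut is empty. For $C=\emptyset$, both the statement as written and the last step of the paper's chain ($aC=Ca$ for all $a$ $\Rightarrow$ $C$ is a normal subgroup) fail.
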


\begin{proof}
Suppose that $P$ is a PFNSG of $G$\\ 
$\iff~ aP = Pa$ for all $a \in G$ \\
$\iff C_{r, s, t} (aP) = C_{r, s, t} (Pa),$ for all $0 \leq r, s, t \leq 1$ with $0 \leq r + s + t \leq 1$\\
$\iff~ a * C_{r, s, t} (P) = C_{r, s, t} (P) * a,$ for all $a \in G$\\
$\iff~ C_{r, s, t} (P)$ is a PFNSG of $G.$ 
\end{proof}

\begin{definition}
Let $P = (\sigma_P, \tau_P, \eta_P)$ and $Q = (\sigma_Q, \tau_Q, \eta_Q)$ be PFSGs of group $G.$ Then, $P$ and $Q$ are said to be picture fuzzy conjugate subgroups (PFCSG) of $G$ if for some $a \in G$ $$\sigma_{P}(g) = \sigma_{Q}(a^{-1} g a),~ \tau_{P}(g) = \tau_{Q}(a^{-1} g a)~ \text{and}~ \eta_{P}(g) = \eta_{Q}(a^{-1} g a),~ \text{for~ every}~ g \in G.$$ 
\end{definition}

\ \\ \ \\

\section{Direct Product of Picture Fuzzy Subgroups}
\begin{definition}
Let $P = \lbrace \langle p, \sigma_P(p), \tau_P(p), \eta_P(p) \rangle~|~p~ \in Y_1 \rbrace$ and $Q = \lbrace \langle q, \sigma_Q(q), \tau_Q(q), \eta_Q(q) \rangle~|~q~ \in Y_2 \rbrace$ be two PFSs. Then, the Cartesian product of $P$ and $Q$ is the PFS $$P \times Q = \lbrace \langle(p, q), \sigma_{P \times Q}(p, q), \tau_{P \times Q}(p, q), \eta_{P \times Q}(p, q)\rangle~|~(p, q)~ \in Y_1 \times Y_2 \rbrace,$$ where $\sigma_{P \times Q}(p, q) = \bigwedge\{\sigma_P(p), \sigma_Q(q)\}$, $\tau_{P \times Q}(p, q) = \bigwedge\{ \tau_P(p), \tau_Q(q)\}$\\ and $\eta_{P \times Q}(p, q) = \bigvee\{ \eta_P(p) \vee \eta_Q(q)\}$ for all $(p, q) \in Y_1 \times Y_2.$
\end{definition}

\begin{theorem}
Let $P = (\sigma_P, \tau_P, \eta_P)$ and $Q = (\sigma_Q, \tau_Q, \eta_Q)$ be PFSs of $Y_1$ and $Y_2,$ respectively. Then, $$C_{r, s, t} (P \times Q) = C_{r, s, t} (P) \times C_{r, s, t} (Q),~\forall~ 0 \leq r, s, t \leq 1~\text{with}~r + s + t \in [0, 1].$$ 
\end{theorem}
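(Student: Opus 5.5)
The plan is a direct double-inclusion argument that unwinds the definition of the $(r,s,t)$-cut set and of the Cartesian product of PFSs. The only facts used are elementary lattice properties of $\wedge$ and $\vee$ on $[0,1]$:
\[
a \wedge b \geq r \iff a \geq r \ \text{and}\ b \geq r, \qquad
a \vee b \leq t \iff a \leq t \ \text{and}\ b \leq t .
\]
Here I read $\eta_{P \times Q}(p,q)$ as $\eta_P(p) \vee \eta_Q(q)$, which is evidently the intended meaning of the definition. No earlier theorem is needed; the group structure plays no role, since the statement concerns arbitrary PFSs of $Y_1$ and $Y_2$.

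Fix $r,s,t \in [0,1]$ with $r+s+t \leq 1$, and let $(p,q) \in Y_1 \times Y_2$. By definition, $(p,q) \in C_{r,s,t}(P \times Q)$ if and only if three conditions hold:
\[
\sigma_P(p) \wedge \sigma_Q(q) \geq r, \qquad \tau_P(p) \wedge \tau_Q(q) \geq s, \qquad \eta_P(p) \vee \eta_Q(q) \leq t .
\]
Applying the two equivalences above to each condition separately, this system is equivalent to the six inequalities
\[
\sigma_P(p) \geq r,\quad \tau_P(p) \geq s,\quad \eta_P(p) \leq t,\qquad \sigma_Q(q) \geq r,\quad \tau_Q(q) \geq s,\quad \eta_Q(q) \leq t .
\]
Grouping the first three and the last three, these say precisely that $p \in C_{r,s,t}(P)$ and $q \in C_{r,s,t}(Q)$, that is, $(p,q) \in C_{r,s,t}(P) \times C_{r,s,t}(Q)$.

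Since every step is an equivalence, both inclusions follow at once. I will nonetheless write the argument as two inclusions, for readability in the style of the paper.

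There is no real obstacle here. The only point requiring care is the $\eta$-component: there the inequality runs the other way, so one must use the $\vee$ and $\leq$ characterisation rather than the $\wedge$ and $\geq$ one. It is also worth noting explicitly that the equivalences hold for any values in $[0,1]$. In particular, the constraint $r+s+t \leq 1$ is needed only so that the cut sets are defined as in the paper, and it plays no role in the argument.
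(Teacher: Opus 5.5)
Your proof is correct and takes the same approach as the paper: both unwind the definitions of the cut set and of $P \times Q$ into a single chain of equivalences, splitting each meet/join condition componentwise via $a \wedge b \geq r \iff a \geq r$ and $b \geq r$ (and dually for $\vee$ with the $\eta$-component). Your reading of $\eta_{P\times Q}(p,q)$ as $\eta_P(p) \vee \eta_Q(q)$ is the intended definition, and you state the intermediate conditions more cleanly than the paper's chain does.
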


\begin{proof}
Let $(p, q) \in C_{r, s, t} (P \times Q)$ be any element\\ 
$\iff~(p, q) \geq r,~(p, q) \geq s~\text{and}~(p, q) \leq t $\\ $\iff~\bigwedge \{\sigma_P(p),~\sigma_Q(q)\},~\bigwedge \{\tau_P(p),~\tau_Q(q)\}~\text{and}~\bigvee \{\eta_P(p),~\eta_Q(q)\} $\\ $\iff~\sigma_P(p) \geq r,~\sigma_Q(q) \geq r,~\tau_P(p) \geq s,~\tau_Q(q) \geq s~\text{and}~\eta_P(p) \leq t,~\eta_Q(q) \leq t$\\ $\iff~\sigma_P(p) \geq r,~ \tau_P(p) \geq s,~\eta_P(p) \leq t~ \text{and}~ \sigma_Q(q) \geq r,~ \tau_Q(q) \geq s,~\eta_Q(q) \leq t$\\ $\iff~p \in C_{r, s, t} (P)~ \text{and}~ q \in C_{r, s, t} (Q)$\\ $\iff~ (p, q) \in C_{r, s, t} (P) \times C_{r, s, t} (Q).$\\ Hence, $$C_{r, s, t} (P \times Q) = C_{r, s, t} (P) \times C_{r, s, t} (Q).$$ 
\end{proof}

\begin{theorem}
Let $P = (\sigma_P, \tau_P, \eta_P)$ and $Q = (\sigma_Q, \tau_Q, \eta_Q)$ be PFSGs of groups $G$ and $G^{*},$ respectively. Then, $P \times Q$ is also a PFSG of group $G \times G^{*}.$
\end{theorem}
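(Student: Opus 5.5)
The plan is to verify the single-condition characterisation of a PFSG given in the definition of Dogra and Pal directly for $P \times Q$. That is, for arbitrary elements $(a, a^{*}), (b, b^{*}) \in G \times G^{*}$, I will show that
\[
\sigma_{P\times Q}\bigl((a,a^{*})(b,b^{*})^{-1}\bigr) \geq \sigma_{P\times Q}(a,a^{*}) \wedge \sigma_{P\times Q}(b,b^{*}),
\]
together with the analogous inequality for $\tau_{P\times Q}$ and the reversed inequality (with $\vee$) for $\eta_{P\times Q}$. The first step is to note that in the direct product group $(b,b^{*})^{-1} = (b^{-1}, b^{*-1})$. Hence $(a,a^{*})(b,b^{*})^{-1} = (ab^{-1}, a^{*}b^{*-1})$, and by the definition of the Cartesian product
\[
\sigma_{P\times Q}(ab^{-1}, a^{*}b^{*-1}) = \sigma_P(ab^{-1}) \wedge \sigma_Q(a^{*}b^{*-1}).
\]

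Next I will apply the PFSG property of $P$ in $G$ and of $Q$ in $G^{*}$ separately. This gives $\sigma_P(ab^{-1}) \geq \sigma_P(a)\wedge\sigma_P(b)$ and $\sigma_Q(a^{*}b^{*-1}) \geq \sigma_Q(a^{*})\wedge\sigma_Q(b^{*})$. Taking the minimum of these two bounds and using the associativity and commutativity of $\wedge$, I regroup
\[
\sigma_P(a)\wedge\sigma_P(b)\wedge\sigma_Q(a^{*})\wedge\sigma_Q(b^{*})
\]
as $\bigl(\sigma_P(a)\wedge\sigma_Q(a^{*})\bigr)\wedge\bigl(\sigma_P(b)\wedge\sigma_Q(b^{*})\bigr)$. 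This is exactly $\sigma_{P\times Q}(a,a^{*})\wedge\sigma_{P\times Q}(b,b^{*})$. The $\tau$ component is handled identically. The $\eta$ component is the dual computation with $\vee$ and $\leq$, using $\eta_{P\times Q} = \eta_P \vee \eta_Q$. Here the monotonicity of $\vee$ lets the two upper bounds combine into an upper bound for the join.

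I will also check that $P\times Q$ is genuinely a PFS, i.e.\ $\sigma_{P\times Q}+\tau_{P\times Q}+\eta_{P\times Q}\le 1$. This is the step I expect to be the real obstacle. It does not follow merely from $P$ and $Q$ being PFSs, because the minima and the maximum may be taken from different factors. The paper's definition of the Cartesian product implicitly assumes it, so I will take that as part of the hypothesis.

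As an alternative, one could combine the earlier theorem $C_{r,s,t}(P\times Q)=C_{r,s,t}(P)\times C_{r,s,t}(Q)$ with Theorem \ref{1}. Each cut $C_{r,s,t}(P)$ and $C_{r,s,t}(Q)$ is a subgroup, or empty. A product of subgroups is a subgroup of $G\times G^{*}$, so every cut of $P\times Q$ is a subgroup, and Theorem \ref{1} would then give the result. However, that route forces one to deal with empty cuts, so I will present the direct componentwise verification as the main proof.
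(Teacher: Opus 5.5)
Your proof is correct, but it does not follow the paper's route. The paper argues entirely through cut sets, which is the alternative you sketch at the end. By Theorem \ref{1}, each $C_{r,s,t}(P)$ and $C_{r,s,t}(Q)$ is a subgroup, so their product is a subgroup of $G\times G^{*}$. By the preceding theorem this product equals $C_{r,s,t}(P\times Q)$, and Theorem \ref{1} in the converse direction then gives that $P\times Q$ is a PFSG.

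Your componentwise check of $\sigma_{P\times Q}\bigl((a,a^{*})(b,b^{*})^{-1}\bigr)\ge\sigma_{P\times Q}(a,a^{*})\wedge\sigma_{P\times Q}(b,b^{*})$, with its $\tau$ and $\eta$ analogues, is self-contained. It uses only monotonicity, associativity and commutativity of $\wedge$ and $\vee$. It also does not depend on the exact form of the level-set characterisation: its converse direction, empty cuts, and the constraint $r+s+t\le 1$. The paper's proof passes over all of these. The paper's route, in turn, fits its theme of reducing fuzzy statements to crisp facts about $(r,s,t)$-cuts. Your concern about empty cuts is mild there: $C_{r,s,t}(P\times Q)$ is nonempty exactly when both factor cuts are, and the usual form of the cut characterisation only concerns nonempty cuts.

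One correction. The condition $\sigma_{P\times Q}+\tau_{P\times Q}+\eta_{P\times Q}\le 1$ is not an obstacle, and you should not add it as a hypothesis. Fix $(p,q)$ and suppose, say, $\eta_P(p)\ge\eta_Q(q)$, so that $\eta_{P\times Q}(p,q)=\eta_P(p)$. Since $\sigma_{P\times Q}(p,q)\le\sigma_P(p)$ and $\tau_{P\times Q}(p,q)\le\tau_P(p)$, the sum is at most $\sigma_P(p)+\tau_P(p)+\eta_P(p)\le 1$. The other case is symmetric. So the bound follows from $P$ and $Q$ being PFSs, whichever factors the two minima come from. Your claim that it does not follow is false, and this one-line check should replace the extra assumption.
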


\begin{proof}
Since $P$ and $Q$ are PFSG of $G$ and $G^{*},$ respectively, then $C_{r, s, t} (P)$ and $C_{r, s, t} (Q)$ are PFSG of $G$ and $G^{*},$ respectively, $\forall~r , s, t \in [0, 1]$ with $r + s + t \in [0, 1]$ by Theorem \ref{1}. This shows that $C_{r, s, t} (P) \times C_{r, s, t} (Q)$ is a PFSG of $G \times G^{*}$ which also implies that $C_{r, s, t} (P \times Q)$ is a PFSG of $G \times G^{*}$ by Theorem \ref{1}.    
\end{proof}

\begin{theorem} \label{a}
Let $P = (\sigma_P, \tau_P, \eta_P)$ and $Q = (\sigma_Q, \tau_Q, \eta_Q)$ be PFNSGs of groups $G$ and $G^{*},$ respectively. Then, $P \times Q$ is also a PFNSG of group $G \times G^{*}.$
\end{theorem}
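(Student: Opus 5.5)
By the preceding theorem, $P \times Q$ is already a PFSG of $G \times G^{*}$. So only normality remains to be checked. I plan to verify it in two ways: directly from the redefined characterisation of PFNSG, and, parallel to the paper's earlier proofs, through cut sets using Theorem \ref{2}.

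\emph{Direct route.} By the redefined definition, it suffices to show that
$$\sigma_{P\times Q}(xy)=\sigma_{P\times Q}(yx),\quad \tau_{P\times Q}(xy)=\tau_{P\times Q}(yx),\quad \eta_{P\times Q}(xy)=\eta_{P\times Q}(yx)$$
for all $x=(a_1,b_1)$ and $y=(a_2,b_2)$ in $G\times G^{*}$. Multiplication in $G\times G^{*}$ is componentwise, so $xy=(a_1a_2,\,b_1b_2)$ and $yx=(a_2a_1,\,b_2b_1)$. By the definition of the Cartesian product,
$$\sigma_{P\times Q}(xy)=\sigma_P(a_1a_2)\wedge\sigma_Q(b_1b_2).$$
Normality of $P$ gives $\sigma_P(a_1a_2)=\sigma_P(a_2a_1)$, and normality of $Q$ gives $\sigma_Q(b_1b_2)=\sigma_Q(b_2b_1)$. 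Substituting, the right-hand side equals $\sigma_{P\times Q}(yx)$. The $\tau$ component is identical. The $\eta$ component is the same argument with $\vee$ in place of $\wedge$.

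\emph{Cut-set route.} Fix $r,s,t$ with $r+s+t\le 1$. The earlier theorem gives $C_{r,s,t}(P\times Q)=C_{r,s,t}(P)\times C_{r,s,t}(Q)$. By Theorem \ref{2} (or Theorem \ref{1}), each factor is a normal subgroup of $G$ and $G^{*}$ respectively. A direct product of normal subgroups is normal in the product group, since conjugation acts componentwise: $(g,h)(n,m)(g,h)^{-1}=(gng^{-1},hmh^{-1})$. Hence every cut set of $P\times Q$ is normal, and applying Theorem \ref{2} in the reverse direction shows that $P\times Q$ is a PFNSG.

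One point needs care in the cut-set route. Some cut sets may be empty, for example when $r$ is large. These should either be excluded or treated by the usual convention, as in Theorem \ref{1}. For this reason I would present the direct computation as the main proof, since it involves no such subtlety, and give the cut-set argument as a remark. There is no real obstacle; the key step is simply that the group operation is componentwise and $\wedge$, $\vee$ respect the equalities in each coordinate.
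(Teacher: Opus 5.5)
Your argument is correct, and your main proof takes a different route from the paper's. Your cut-set remark is essentially the paper's own proof. There, Theorem~\ref{2} makes $C_{r,s,t}(P)$ and $C_{r,s,t}(Q)$ normal, their product is then normal, and Theorem~\ref{2} is applied in reverse to conclude about $P\times Q$. You make explicit two things the paper leaves implicit: the identity $C_{r,s,t}(P\times Q)=C_{r,s,t}(P)\times C_{r,s,t}(Q)$ and the componentwise conjugation. You also correctly flag the empty-cut-set issue, which the paper ignores.

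Your main proof instead checks the elementwise characterisation $\sigma(xy)=\sigma(yx)$, $\tau(xy)=\tau(yx)$, $\eta(xy)=\eta(yx)$ coordinatewise. This is shorter and self-contained. It also avoids depending on Theorem~\ref{2}, whose statement treats a crisp cut set as a PFNSG and is only valid when restricted to nonempty cut sets. What the paper's route buys is consistency with its programme of reducing questions about picture fuzzy subgroups to crisp statements about $(r,s,t)$-cut sets. In that setting the result becomes the crisp fact that a direct product of normal subgroups is normal.

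One small point if you want the main proof genuinely free of cut sets. The PFSG property of $P\times Q$ that you import from the preceding theorem is itself proved in the paper via cut sets, so verify it directly as well. It is the same coordinatewise computation, e.g. $\sigma_{P\times Q}(xy^{-1})=\sigma_P(a_1a_2^{-1})\wedge\sigma_Q(b_1b_2^{-1})\ge\sigma_{P\times Q}(x)\wedge\sigma_{P\times Q}(y)$. The $\tau$ case is identical, and the $\eta$ case is dual, with $\vee$ and $\le$.
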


\begin{proof}
Since $P$ and $Q$ are PFNSG of $G$ and $G^{*},$ respectively. Then, by Theorem \ref{2}, $C_{r, s, t} (P)$ and $C_{r, s, t} (Q)$ are PFNSG of $G$ and $G^{*},$ respectively. This means that $C_{r, s, t} (P) \times C_{r, s, t} (Q)$ is a PFNSG of $G \times G^{*}$ which also implies that $C_{r, s, t} (P \times Q)$ is a PFSG of $G \times G^{*}$ by Theorem \ref{2}. 
\end{proof}

\begin{theorem} \label{b}
Let $P = (\sigma_P, \tau_P, \eta_P)$ and $Q = (\sigma_Q, \tau_Q, \eta_Q)$ be PFSGs of groups $G$ and $G^{*},$ respectively. Assume that $e_1$ and $e_2$ are the identities of $G$ and $G^{*},$ respectively and if $P \times Q$ is a PFSG of $G \times G^{*},$ then at least on of the following must holds:\\
(i)~ $\sigma_Q(e_2) \geq \sigma_P(g),~ \tau_Q(e_2) \geq \tau_P(g),~\text{and}~\eta_Q(e_2) \leq \eta_P(g),~\forall~g \in G.$\\ 
(ii)~ $\sigma_Q(e_1) \geq \sigma_P(g^*),~ \tau_Q(e_1) \geq \tau_P(g^*),~\text{and}~\eta_Q(e_1) \leq \eta_P(g^*),~\forall~g^* \in G^*.$
\end{theorem}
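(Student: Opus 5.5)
I would argue by contradiction, reading the statement with the evident typos corrected: in (ii) the roles of $P$ and $Q$ are swapped, i.e.\ (ii) should say $\sigma_P(e_1) \geq \sigma_Q(g^*)$, $\tau_P(e_1) \geq \tau_Q(g^*)$ and $\eta_P(e_1) \leq \eta_Q(g^*)$ for all $g^* \in G^*$.

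The first step is the standard fact that a PFSG takes its extreme values at the identity. If $R$ is any PFSG of a group $H$ with identity $e$, then for every $x \in H$, using (i) and (ii) of the definition of a PFSG,
$$\sigma_R(e) = \sigma_R(x \ast x^{-1}) \geq \sigma_R(x) \wedge \sigma_R(x^{-1}) \geq \sigma_R(x).$$
The same computation gives $\tau_R(e) \geq \tau_R(x)$, and dually $\eta_R(e) \leq \eta_R(x)$. I will apply this to $R = P \times Q$ on $H = G \times G^*$, whose identity is $(e_1, e_2)$.

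The second step handles one membership component at a time. Take the positive component $\sigma$ first. Suppose the $\sigma$-part of both alternatives fails. Then there are $a \in G$ with $\sigma_Q(e_2) < \sigma_P(a)$ and $b \in G^*$ with $\sigma_P(e_1) < \sigma_Q(b)$. By the definition of the Cartesian product,
$$\sigma_{P\times Q}(a,b) = \sigma_P(a) \wedge \sigma_Q(b).$$
Each of $\sigma_P(a)$ and $\sigma_Q(b)$ exceeds both $\sigma_Q(e_2)$ and $\sigma_P(e_1)$. To see this, note that $\sigma_P(a) \leq \sigma_P(e_1)$ by the first step applied to $P$, and we also have $\sigma_P(e_1) < \sigma_Q(b)$; symmetrically for $\sigma_Q(b)$. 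Hence
$$\sigma_{P\times Q}(a,b) > \sigma_P(e_1) \wedge \sigma_Q(e_2) = \sigma_{P\times Q}(e_1,e_2).$$
This contradicts the first step for $P \times Q$. The $\tau$ component is identical. The $\eta$ component is the same argument with all inequalities reversed and $\wedge$ replaced by $\vee$: we get $\eta_{P\times Q}(a,b) < \eta_{P\times Q}(e_1,e_2)$, again a contradiction.

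The main obstacle is assembling the three components into the statement as written. The argument above shows that for each of $\sigma$, $\tau$ and $\eta$ separately, either the (i)-inequality holds for all $g$ or the (ii)-inequality holds for all $g^*$. It does not by itself rule out a mixed situation, for instance the $\sigma$-part of (i) holding together with the $\tau$-part of (ii) but not (i) or (ii) in full. My first attempt would be to look for an extra interaction between components that prevents such mixing. If none is available, I would state and prove the result in this componentwise form, which is the natural analogue of the classical fuzzy and intuitionistic-fuzzy result and is what the contradiction argument genuinely yields, and remark that the literal joint version requires the alternatives to be chosen consistently across the three components.
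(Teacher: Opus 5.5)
Your contradiction argument is the paper's argument: compare $P\times Q$ at $(g,g^*)$ with its value at $(e_1,e_2)$. Your worry about mixing is justified, because there is no extra interaction between components and the joint statement is false.

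Take any group $G=G^*$, so that even the uncorrected (ii) makes sense, and the constant PFSs $P\equiv(0.4,0.2,0.1)$ and $Q\equiv(0.2,0.4,0.1)$. Constant PFSs are PFSGs, and $P\times Q\equiv(0.2,0.2,0.1)$ is a PFSG of $G\times G^*$. Yet:
\begin{itemize}
\item (i) fails, since $\sigma_Q(e_2)=0.2<0.4=\sigma_P(g)$;
\item your corrected (ii) fails, since $\tau_P(e_1)=0.2<0.4=\tau_Q(g^*)$;
\item the literal (ii) fails for the same reason as (i).
\end{itemize}

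The paper's proof breaks exactly at the step you declined to take. From ``(i) does not hold'' it infers a single $g$ with $\sigma_P(g)>\sigma_Q(e_2)$, $\tau_P(g)>\tau_Q(e_2)$ \emph{and} $\eta_P(g)<\eta_Q(e_2)$. But negating a conjunction only yields a $g$ violating at least one of the three inequalities. In the example, the failure of (i) is witnessed only by $\sigma$ and that of (ii) only by $\tau$. Indeed $\sigma_{P\times Q}(g,g^*)=\sigma_{P\times Q}(e_1,e_2)$, so no contradiction arises. Your fallback is therefore the right outcome, but state it decisively: prove the componentwise version and refute the joint version with this example. Do not leave open a search for some interaction between components.

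One sub-step needs repair. You claim that each of $\sigma_P(a)$ and $\sigma_Q(b)$ exceeds both $\sigma_Q(e_2)$ and $\sigma_P(e_1)$, but your justification does not show this. You yourself note $\sigma_P(a)\le\sigma_P(e_1)$, so $\sigma_P(a)>\sigma_P(e_1)$ is impossible. All you need is
\begin{itemize}
\item $\sigma_P(a)>\sigma_Q(e_2)\ge\sigma_P(e_1)\wedge\sigma_Q(e_2)$, and
\item $\sigma_Q(b)>\sigma_P(e_1)\ge\sigma_P(e_1)\wedge\sigma_Q(e_2)$.
\end{itemize}
Together these give $\sigma_{P\times Q}(a,b)>\sigma_{P\times Q}(e_1,e_2)$, using only that $P\times Q$ is a PFSG. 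This form survives when $P$ and $Q$ are merely PFSs, which is the setting where the hypothesis on $P\times Q$ has real content. Under the stated hypotheses, $P\times Q$ is automatically a PFSG.

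If you do use that $P$ and $Q$ are PFSGs, the argument collapses to the chain $\sigma_Q(e_2)<\sigma_P(a)\le\sigma_P(e_1)<\sigma_Q(b)\le\sigma_Q(e_2)$. So the componentwise result is just a comparison of $\sigma_P(e_1)$ with $\sigma_Q(e_2)$, and likewise for $\tau$ and $\eta$.
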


\begin{proof}
Let $P \times Q$ be a PFSG of $G \times G^{*}.$ Suppose none of (i) and (ii) holds. Then, there exists $g \in G$ and $g^* \in G^*$ such that\\ $\sigma_P(g) > \sigma_Q(e_2),~ \tau_P(g) > \tau_Q(e_2), \eta_P(g) < \eta_Q(e_2)$ and \\ $\sigma_Q(g^*) > \sigma_P(e_1),~ \tau_Q(g^*) > \tau_P(e_1), \eta_Q(g^*) < \eta_P(e_1).$ So, $$\sigma_{P \times Q}(g, g^*) = \bigwedge \{\sigma_P(g), \sigma_Q(g^*)\} > \{\sigma_P(e_1), \sigma_Q(e_2)\} = \sigma_{P \times Q}(e_1, e_2),$$ $$\tau_{P \times Q}(g, g^*) = \bigwedge \{\tau_P(g), \tau_Q(g^*)\} > \{\tau_P(e_1), \tau_Q(e_2)\} = \tau_{P \times Q}(e_1, e_2)$$ and $$\eta_{P \times Q}(g, g^*) = \bigvee \{\eta_P(g), \eta_Q(g^*)\} < \{\eta_P(e_1), \eta_Q(e_2)\} = \eta_{P \times Q}(e_1, e_2).$$ This means that $P \times Q$ is not a PFSG of $G \times G^{*},$ which is a contradiction. Therefore, either $\sigma_Q(e_2) \geq \sigma_P(g),~ \tau_Q(e_2) \geq \tau_P(g),~\text{and}~\eta_Q(e_2) \leq \eta_P(g),~\forall~g \in G$ or  $\sigma_Q(e_1) \geq \sigma_P(g^*),~ \tau_Q(e_1) \geq \tau_P(g^*),~\text{and}~\eta_Q(e_1) \leq \eta_P(g^*),~\forall~g^* \in G^*.$ 
\end{proof}

\begin{theorem} \label{c}
Let $P = (\sigma_P, \tau_P, \eta_P)$ and $Q = (\sigma_Q, \tau_Q, \eta_Q)$ be PFSGs of groups $G$ and $G^{*},$ respectively such that $\sigma_P(g) \leq \sigma_Q(e_2),~ \tau_P(g) \leq \tau_Q(e_2)$ and $\eta_P(g) \geq \eta_Q(e_2)$ holds for all $g \in G,$ $e_2$ being the identity elements of $G.$ If $P \times Q$ is a PFSG of $G \times G^*,$ then $P$ is a PFSG of $G.$
\end{theorem}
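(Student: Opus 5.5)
The plan is the standard embedding argument: pull the subgroup inequalities for $P \times Q$ back to $P$ by restricting to the copy $\{(g, e_2) : g \in G\}$ of $G$ inside $G \times G^*$. The hypothesis $\sigma_P(g) \leq \sigma_Q(e_2)$, $\tau_P(g) \leq \tau_Q(e_2)$, $\eta_P(g) \geq \eta_Q(e_2)$ for all $g \in G$ (here $e_2$ is of course the identity of $G^*$) is exactly what makes this restriction faithful. For every $g \in G$,
$$\sigma_{P \times Q}(g, e_2) = \sigma_P(g) \wedge \sigma_Q(e_2) = \sigma_P(g),$$
and likewise $\tau_{P \times Q}(g, e_2) = \tau_P(g)$ and $\eta_{P \times Q}(g, e_2) = \eta_P(g) \vee \eta_Q(e_2) = \eta_P(g)$. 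So $P$ is precisely the restriction of $P \times Q$ to $G \times \{e_2\}$.

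Next I would use the equivalent single-condition form of the PFSG definition. Take $a, b \in G$. In $G \times G^*$ we have $(a, e_2)(b, e_2)^{-1} = (a b^{-1}, e_2)$. Since $P \times Q$ is a PFSG of $G \times G^*$,
$$\sigma_P(ab^{-1}) = \sigma_{P \times Q}(ab^{-1}, e_2) \geq \sigma_{P \times Q}(a, e_2) \wedge \sigma_{P \times Q}(b, e_2) = \sigma_P(a) \wedge \sigma_P(b).$$
The same chain, with $\tau$ in place of $\sigma$, gives the neutral inequality. With $\eta$ and the reversed inequality, it gives $\eta_P(ab^{-1}) \leq \eta_P(a) \vee \eta_P(b)$. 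By the equivalent characterisation of a PFSG, $P$ is then a PFSG of $G$.

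Alternatively, one could argue through cut sets using Theorem \ref{1} and the identity $C_{r,s,t}(P \times Q) = C_{r,s,t}(P) \times C_{r,s,t}(Q)$. That route is worse: projecting the subgroup $C_{r,s,t}(P) \times C_{r,s,t}(Q)$ onto $G$ gives a subgroup only when $C_{r,s,t}(Q)$ is nonempty. Ensuring that requires $e_2 \in C_{r,s,t}(Q)$ whenever $C_{r,s,t}(P) \neq \emptyset$, which again comes from the hypothesis. So the direct pointwise argument is cleaner, and I would present that one.

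There is no serious obstacle. The only delicate point is the first step, checking that the hypothesis makes the minimum and maximum collapse to $P$'s values; each comparison must go in the right direction, with $\eta$ reversed relative to $\sigma$ and $\tau$. Everything else is a one-line computation in the product group.
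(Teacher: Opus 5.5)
Your proposal is correct and is essentially the paper's argument. You embed $G$ as $G \times \{e_2\}$, use the hypothesis to collapse $\sigma_P(g)\wedge\sigma_Q(e_2)$, $\tau_P(g)\wedge\tau_Q(e_2)$, $\eta_P(g)\vee\eta_Q(e_2)$ to $P$'s own values, and pull back the subgroup inequalities of $P \times Q$. The one difference works in your favour: you apply the $xy^{-1}$ form of the definition to $P \times Q$ directly, whereas the paper applies the product condition to $(g,e_2)((g^*)^{-1},e_2)$ and then finishes with $\sigma_P((g^*)^{-1}) \geq \sigma_P(g^*)$ (and its $\tau$, $\eta$ analogues), an inverse property of $P$ itself, so your version needs nothing about $P$ beyond being a picture fuzzy set.
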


\begin{proof}
Let $g, g^* \in G$ be any element. Then, $(g, e_2)~ \text{and}~ (g^*, e_2) \in G \times G^*.$ Since $\sigma_P(g) \leq \sigma_Q(e_2),~ \tau_P(g) \leq \tau_Q(e_2)$ and $\eta_P(g) \geq \eta_Q(e_2)$ holds for all $g \in G,$ we have, 

\begin{eqnarray*}
\sigma_P(g (g^*)^{-1}) & = & \bigwedge \lbrace \sigma_P(g (g^*)^{-1}), \sigma_Q(e_2, e_2) \rbrace\\
& = & \sigma_{P \times Q}(g (g^*)^{-1}, (e_2, e_2))\\
& = & \sigma_{P \times Q}[(g, e_2)((g^*)^{-1}, e_2)]\\
& \geq & \bigwedge \lbrace \sigma_{P \times Q}(g, e_2), \sigma_{P \times Q}((g^*)^{-1}, e_2) \rbrace\\
& = & \bigwedge \lbrace \bigwedge\lbrace \sigma_P(g), \sigma_Q(e_2)\rbrace, \bigwedge\lbrace \sigma_P(g^*)^{-1}, \sigma_Q(e_2) \rbrace \rbrace\\
& = & \bigwedge\lbrace \sigma_P(g), \sigma_P((g^*)^{-1} \rbrace\\
& \geq & \bigwedge\lbrace \sigma_P(g), \sigma_P(g^*) \rbrace. 
\end{eqnarray*}

\begin{eqnarray*}
\tau_P(g (g^*)^{-1}) & = & \bigwedge \lbrace \tau_P(g (g^*)^{-1}), \tau_Q(e_2, e_2) \rbrace\\
& = & \tau_{P \times Q}(g (g^*)^{-1}, (e_2, e_2))\\
& = & \tau_{P \times Q}[(g, e_2)((g^*)^{-1}, e_2)]\\
& \geq & \bigwedge \lbrace \tau_{P \times Q}(g, e_2), \tau_{P \times Q}((g^*)^{-1}, e_2) \rbrace\\
& = & \bigwedge \lbrace \bigwedge\lbrace \tau_P(g), \tau_Q(e_2)\rbrace, \bigwedge\lbrace \tau_P(g^*)^{-1}, \tau_Q(e_2) \rbrace \rbrace\\
& = & \bigwedge\lbrace \tau_P(g), \tau_P((g^*)^{-1} \rbrace\\
& \geq & \bigwedge\lbrace \tau_P(g), \tau_P(g^*) \rbrace. 
\end{eqnarray*}
and
\begin{eqnarray*}
\eta_P(g (g^*)^{-1}) & = & \bigvee \lbrace \eta_P(g (g^*)^{-1}), \eta_Q(e_2, e_2) \rbrace\\
& = & \eta_{P \times Q}(g (g^*)^{-1}, (e_2, e_2))\\
& = & \eta_{P \times Q}[(g, e_2)((g^*)^{-1}, e_2)]\\
& \leq & \bigvee \lbrace \eta_{P \times Q}(g, e_2), \eta_{P \times Q}((g^*)^{-1}, e_2) \rbrace\\
& = & \bigvee \lbrace \bigvee\lbrace \eta_P(g), \eta_Q(e_2)\rbrace, \bigvee\lbrace \eta_P(g^*)^{-1}, \eta_Q(e_2) \rbrace \rbrace\\
& = & \bigvee\lbrace \eta_P(g), \eta_P((g^*)^{-1} \rbrace\\
& \leq & \bigvee\lbrace \eta_P(g), \eta_P(g^*) \rbrace. 
\end{eqnarray*}
Therefore, $P$ is a PFSG of $G.$
\end{proof}

\begin{corollary}
Let $P = (\sigma_P, \tau_P, \eta_P)$ and $Q = (\sigma_Q, \tau_Q, \eta_Q)$ be PFSGs of groups $G$ and $G^{*},$ respectively such that $\sigma_Q(g^*) \leq \sigma_Q(e_1),~ \tau_Q(g^*) \leq \tau_Q(e_1)$ and $\eta_Q(g) \geq \eta_Q(e_1)$ holds for all $g^* \in G^*,$ $e_1$ being the identity elements of $G.$ If $P \times Q$ is a PFSG of $G \times G^*,$ then $Q$ is a PFSG of $G^*.$
\end{corollary}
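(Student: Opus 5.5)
The corollary is the mirror image of Theorem \ref{c}, with the roles of the two coordinates exchanged. As stated it contains typos: the hypothesis should read $\sigma_Q(g^*) \leq \sigma_P(e_1)$, $\tau_Q(g^*) \leq \tau_P(e_1)$ and $\eta_Q(g^*) \geq \eta_P(e_1)$ for all $g^* \in G^*$, where $e_1$ is the identity of $G$. I will prove it in this form, which is exactly condition (ii) of Theorem \ref{b}.

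The plan is to embed $G^*$ into $G \times G^*$ through the slice $g^* \mapsto (e_1, g^*)$. The first step is to observe that, under the hypothesis, the product membership degrees on this slice reduce to those of $Q$:
$$\sigma_{P \times Q}(e_1, g^*) = \bigwedge\{\sigma_P(e_1), \sigma_Q(g^*)\} = \sigma_Q(g^*).$$
In the same way, $\tau_{P\times Q}(e_1,g^*)=\tau_Q(g^*)$ and $\eta_{P\times Q}(e_1,g^*)=\bigvee\{\eta_P(e_1),\eta_Q(g^*)\}=\eta_Q(g^*)$. This holds for every $g^* \in G^*$, in particular for elements of the form $a(b)^{-1}$.

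Next, fix $a, b \in G^*$. In $G \times G^*$ we have $(e_1, a)(e_1, b)^{-1} = (e_1, ab^{-1})$, because $e_1 e_1^{-1} = e_1$. Since $P \times Q$ is a PFSG of $G \times G^*$, the equivalent one-line criterion in the definition of PFSG gives
$$\sigma_Q(ab^{-1}) = \sigma_{P\times Q}\big((e_1,a)(e_1,b)^{-1}\big) \geq \bigwedge\{\sigma_{P\times Q}(e_1,a), \sigma_{P\times Q}(e_1,b)\} = \bigwedge\{\sigma_Q(a),\sigma_Q(b)\}.$$
The argument for $\tau$ is identical. For $\eta$ the inequality is reversed and $\bigwedge$ is replaced by $\bigvee$.

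These three inequalities are precisely the criterion for $Q$ to be a PFSG of $G^*$. Applying the PFSG criterion directly with $b$, rather than via $b^{-1}$ as the proof of Theorem \ref{c} does, avoids an extra step using the inverse inequality. There is no real obstacle in this argument. The only care needed is to read the hypothesis correctly, so that the minimum with $\sigma_P(e_1)$ returns $\sigma_Q$ and the maximum with $\eta_P(e_1)$ returns $\eta_Q$. The hypothesis is also used at the point $ab^{-1}$ itself in the first equality.
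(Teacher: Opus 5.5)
Your proof is correct and is essentially the mirror image of the paper's argument for Theorem \ref{c} (the paper gives no separate proof of this corollary). You restrict $P\times Q$ to the slice $\{e_1\}\times G^*$, where the hypothesis, correctly read with $\sigma_P(e_1)$, $\tau_P(e_1)$, $\eta_P(e_1)$, makes the product degrees coincide with those of $Q$. The one difference is that you apply the $xy^{-1}$ criterion for $P\times Q$ directly, instead of multiplying by $(e_1,b^{-1})$ and then invoking the inverse inequality. As a result your argument uses only that $P\times Q$ is a PFSG, and not any prior subgroup property of $Q$.
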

\ \\
From the following Theorems, Theorem \ref{a}, Theorem \ref{b} and Theorem \ref{c}, we have the next corollary.

\begin{corollary}
Let $P = (\sigma_P, \tau_P, \eta_P)$ and $Q = (\sigma_Q, \tau_Q, \eta_Q)$ be PFSGs of groups $G$ and $G^{*},$ respectively. If $P \times Q$ is a PFSG of $G \times G^*,$ then either $P$ is a PFSG of $G$ or $Q$ is a PFSG of $G^*.$
\end{corollary}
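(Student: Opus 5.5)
The plan is to combine Theorem \ref{b} with Theorem \ref{c} and its symmetric version, the corollary stated just before. I will assume that $P\times Q$ is a PFSG of $G\times G^*$, with $e_1$ the identity of $G$ and $e_2$ the identity of $G^*$. Theorem \ref{b} then says that one of two domination conditions holds: either $Q$ at $e_2$ dominates $P$ everywhere on $G$, or $P$ at $e_1$ dominates $Q$ everywhere on $G^*$. I will read condition (ii) of Theorem \ref{b} with the roles of $P$ and $Q$ swapped relative to the misprinted text, namely
\[
\sigma_P(e_1)\ge\sigma_Q(g^*),\quad \tau_P(e_1)\ge\tau_Q(g^*),\quad \eta_P(e_1)\le\eta_Q(g^*)\quad \forall g^*\in G^*.
\]
The case split is dictated by which of (i) and (ii) occurs.

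\emph{Case (i).} Here $\sigma_P(g)\le\sigma_Q(e_2)$, $\tau_P(g)\le\tau_Q(e_2)$ and $\eta_P(g)\ge\eta_Q(e_2)$ for all $g\in G$. These are exactly the hypotheses of Theorem \ref{c}, so Theorem \ref{c} yields that $P$ is a PFSG of $G$. Concretely, one embeds $G$ as $G\times\{e_2\}$. Then $\sigma_{P\times Q}(g,e_2)=\sigma_P(g)$, and likewise for $\tau$ and $\eta$. The PFSG inequalities for $P\times Q$, applied at $(g,e_2)((g')^{-1},e_2)$, restrict to those for $P$.

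\emph{Case (ii).} This is the mirror image. The corollary following Theorem \ref{c}, read with $P(e_1)$ dominating $Q$, gives that $Q$ is a PFSG of $G^*$. The argument is the same as before, embedding $G^*$ as $\{e_1\}\times G^*$.

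Since at least one case occurs, at least one of the two conclusions holds. The only real obstacle I expect is in applying Theorem \ref{b}. Its proof negates the disjunction ``(i) or (ii)'' as the failure of all three inequalities at a single point, whereas a proper negation only gives the failure of one component inequality at some point. To make the plan robust, I would rerun the argument componentwise and prove the dichotomy separately for $\sigma$, for $\tau$ and for $\eta$. For $\sigma$, suppose $\sigma_P(g)>\sigma_Q(e_2)$ and $\sigma_Q(g^*)>\sigma_P(e_1)$. Then
\[
\sigma_{P\times Q}(g,g^*) > \sigma_P(e_1)\wedge\sigma_Q(e_2)=\sigma_{P\times Q}(e_1,e_2).
\]
This contradicts the fact that in any PFSG the identity carries the largest $\sigma$ (and $\tau$) and the smallest $\eta$. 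That fact follows from the definition via $\sigma(e)=\sigma(aa^{-1})\ge\sigma(a)$.

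Finally, feeding the componentwise version into the restriction argument of Case (i) or Case (ii) still works. For example, $\sigma_P(g)=\sigma_{P\times Q}(g,e_2)$ needs only the $\sigma$-domination. Each component of the subgroup property therefore transfers to whichever factor is dominated in that component. The conclusion is then stated per component. I would note this subtlety but follow the paper's intended reading, which takes conditions (i) and (ii) as whole triples.
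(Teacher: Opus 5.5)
Your route is the paper's own: the paper also derives the corollary from Theorem \ref{b} together with Theorem \ref{c} and its mirror. But the step you flagged is a genuine gap, not merely a flaw in the paper's proof of Theorem \ref{b}. The whole-triple dichotomy is false, so your claim that ``at least one case occurs'' cannot be justified.

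Take $P$ constant $(0.5,0.1,0.2)$ on $G$ and $Q$ constant $(0.3,0.4,0.2)$ on $G^*$. Both are PFSGs, and $P\times Q$ is the constant PFSG $(0.3,0.1,0.2)$. Yet (i) fails, since $\sigma_Q(e_2)=0.3<0.5=\sigma_P(g)$. Your corrected (ii) also fails, since $\tau_P(e_1)=0.1<0.4=\tau_Q(g^*)$.

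Your componentwise dichotomy and your restriction to $G\times\{e_2\}$ or $\{e_1\}\times G^*$ are correct. However, they only show that $\sigma_P$ or $\sigma_Q$ satisfies the subgroup inequalities, and separately the same for $\tau$ and for $\eta$. The factor that works can differ from component to component, as in the example above. So this cannot be assembled into ``$P$ is a PFSG or $Q$ is a PFSG''.

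Nor can this be repaired under the nontrivial reading in which $P$ and $Q$ are arbitrary PFSs. On $G=G^*=\mathbb{Z}_2$ let $P(0)=(0.2,0.1,0)$, $P(1)=(0.4,0.1,0)$, $Q(0)=(0.1,0.2,0)$ and $Q(1)=(0.1,0.4,0)$. Then $P\times Q$ is the constant PFSG $(0.1,0.1,0)$. But $\sigma_P(1+1)=0.2<0.4=\sigma_P(1)\wedge\sigma_P(1)$ and $\tau_Q(1+1)=0.2<0.4=\tau_Q(1)\wedge\tau_Q(1)$, so neither factor is a PFSG.

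The statement as actually written survives only because its hypothesis already assumes that $P$ and $Q$ are PFSGs, so the conclusion is immediate. That one line is the proof you should give. Neither your case analysis nor the paper's appeal to Theorems \ref{a}, \ref{b}, \ref{c} is needed, and the paper's argument rests on the same false dichotomy.
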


\begin{theorem}
Let $P_1 = (\sigma_{P_1}, \tau_{P_1}, \eta_{P_1})$ and $P_2 = (\sigma_{P_2}, \tau_{P_2}, \eta_{P_2})$ be PFSGs of group $G$ and $Q_1 = (\sigma_{Q_1}, \tau_{Q_1}, \eta_{Q_1})$ and $Q_2 = (\sigma_{Q_2}, \tau_{Q_2}, \eta_{Q_2})$ be PFSGs of groups $G^{*},$ respectively such that $P_1$ and $P_2$ are PFCSG of $G$ and $Q_1$ and $Q_2$ are PFCSG of $G^*.$ Then, the PFSG $P_1 \times Q_1$ of $G \times G^*$ is conjugate to the PFSG $P_2 \times Q_2$ of $G \times G^*.$
\end{theorem}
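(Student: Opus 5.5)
The plan is to verify the definition of picture fuzzy conjugate subgroups directly for the pair $P_1 \times Q_1$ and $P_2 \times Q_2$, with the conjugating element built componentwise. By hypothesis there is some $a \in G$ with
$$\sigma_{P_1}(g) = \sigma_{P_2}(a^{-1} g a),\quad \tau_{P_1}(g) = \tau_{P_2}(a^{-1} g a),\quad \eta_{P_1}(g) = \eta_{P_2}(a^{-1} g a)$$
for all $g \in G$. Likewise there is some $b \in G^*$ with the analogous identities relating $Q_1$ and $Q_2$ for all $g^* \in G^*$. I will take $c = (a, b) \in G \times G^*$ as the candidate conjugating element. By the earlier theorem on products, $P_1 \times Q_1$ and $P_2 \times Q_2$ are already PFSGs of $G \times G^*$, so only the conjugacy identities need checking.

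The key computation uses the group structure of $G \times G^*$. Inversion and multiplication are componentwise, so $c^{-1} = (a^{-1}, b^{-1})$ and
$$c^{-1}(g, g^*)c = (a^{-1} g a,\; b^{-1} g^* b).$$
For the positive membership, the definition of the Cartesian product and the two conjugacy hypotheses give
$$\sigma_{P_1 \times Q_1}(g, g^*) = \sigma_{P_1}(g) \wedge \sigma_{Q_1}(g^*) = \sigma_{P_2}(a^{-1} g a) \wedge \sigma_{Q_2}(b^{-1} g^* b) = \sigma_{P_2 \times Q_2}\big(c^{-1}(g, g^*)c\big).$$
The neutral degree $\tau$ is handled identically with $\wedge$. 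The negative degree $\eta$ is handled the same way with $\vee$ in place of $\wedge$. Since $(g, g^*)$ is arbitrary, this establishes the three identities in the definition of PFCSG for the element $c$.

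There is no real obstacle here. The only point needing care is that the conjugating elements for the two factors may differ, so one must use the pair $(a, b)$ rather than a single element. One must also confirm that the conjugation order $a^{-1} g a$ in the definition matches the componentwise conjugation in the product group. The $\eta$ component deserves a brief separate remark, because it uses $\vee$, but it follows the same line.
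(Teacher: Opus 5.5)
Your proposal is correct and follows essentially the same route as the paper: take conjugating elements $a\in G$ and $b\in G^*$ for the two factors, use $(a,b)\in G\times G^*$, and verify the $\sigma$, $\tau$ (with $\wedge$) and $\eta$ (with $\vee$) identities componentwise from the definition of $P\times Q$. You also state the hypothesis correctly as $\sigma_{P_1}(g)=\sigma_{P_2}(a^{-1}ga)$, whereas the paper's proof has the typo $\sigma_{P_1}(a)$ on the left-hand side.
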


\begin{proof}
Since $P_1$ and $P_2$ are PFCSGs of $G.$ Therefore, there exists $a \in G$ such that $$\sigma_{P_1}(a) = \sigma_{P_2}(a^{-1} g a),~ \tau_{P_1}(a) = \tau_{P_2}(a^{-1} g a)~ \text{and}~ \eta_{P_1}(a) = \eta_{P_2}(a^{-1} g a),~ \forall~ g \in G.$$ Similarly, since $Q_1$ and $Q_2$ are PFCSGs of $G^*.$ Therefore, there exists $b \in G^*$ such that $$\sigma_{Q_1}(b) = \sigma_{Q_2}(b^{-1} g^* b),~ \tau_{Q_1}(b) = \tau_{Q_2}(b^{-1} g^* b)~ \text{and}~ \eta_{Q_1}(b) = \eta_{Q_2}(b^{-1} g^* b),~ \forall~ g^* \in G^*.$$ Now,  

\begin{eqnarray*}
\sigma_{P_1 \times Q_1}(g, g^*) & = & \bigwedge \lbrace \sigma_{P_1}(g), \sigma_{Q_1}(g^*) \rbrace\\
& = & \bigwedge\lbrace \sigma_{P_2}(a^{-1}ga), \sigma_{Q_2}(b^{-1}g^*b) \rbrace\\
& = &  \sigma_{P_2 \times Q_2}((a^{-1}ga), (b^{-1}g^*b))\\
& = &  \sigma_{P_2 \times Q_2}((a^{-1}, b^{-1}) (g, g^*)(a, b)). 
\end{eqnarray*}

\begin{eqnarray*}
\tau_{P_1 \times Q_1}(g, g^*) & = & \bigwedge \lbrace \tau_{P_1}(g), \tau_{Q_1}(g^*) \rbrace\\
& = & \bigwedge\lbrace \tau_{P_2}(a^{-1}ga), \tau_{Q_2}(b^{-1}g^*b) \rbrace\\
& = &  \tau_{P_2 \times Q_2}((a^{-1}ga), (b^{-1}g^*b))\\
& = &  \tau_{P_2 \times Q_2}((a^{-1}, b^{-1}) (g, g^*)(a, b)). 
\end{eqnarray*}
and 
\begin{eqnarray*}
\eta_{P_1 \times Q_1}(g, g^*) & = & \bigvee \lbrace \eta_{P_1}(g), \eta_{Q_1}(g^*) \rbrace\\
& = & \bigvee\lbrace \eta_{P_2}(a^{-1}ga), \eta_{Q_2}(b^{-1}g^*b) \rbrace\\
& = &  \eta_{P_2 \times Q_2}((a^{-1}ga), (b^{-1}g^*b))\\
& = &  \eta_{P_2 \times Q_2}((a^{-1}, b^{-1}) (g, g^*)(a, b)). 
\end{eqnarray*}
Therefore, the PFSG $P_1 \times Q_1$ of $G \times G^*$ is conjugate to the PFSG $P_2 \times Q_2$ of $G \times G^*.$
\end{proof}

\ \\ \ \\

\section*{Conclusion}
In this paper, we have established some characterisations of direct product of picture fuzzy subgroups of a group via  $(r, s, t)$-cut sets of picture fuzzy sets. 
\ \\ \ \\ \ \\

%%%%%%%%%%%%%%%%%%%%%%%%%%%%%%%%%%%%%%%%%%%%%%%%%%%%%%%%%%%%%%

\end{document}